\newtheorem{theorem}{Theorem}
\newtheorem{proposition}{proposition}
\theoremstyle{definition}
\newtheorem{definition}{Definition}
\newtheorem{remark}{Remark}
\begin{document}

\title{Relative index theorem in $K$-homology}

\author{V.~E.~Nazaikinskii}
\address{A.~Ishlinsky Institute for Problems in Mechanics,
Moscow; Moscow Institute of Physics and Technology, Dolgoprudny,
Moscow District} \email{nazaikinskii@yandex.ru}

\subjclass[2010]{46L80 (Primary); 19K33, 58J20 (Secondary)}

\maketitle

\begin{abstract}
We prove an analog of Gromov--Lawson type relative index theorems
for $K$-homology classes.
\end{abstract}

\section*{Introduction}

Let $M$ and~$M'$ be two manifolds coinciding outside some subsets
$Q\subset M$ and $Q'\subset M'$ (i.e., $M\setminus Q$ and
$M'\setminus Q'$ are identified with each other), and let
$D$~and~$D'$ be elliptic operators on~$M$ and~$M'$, respectively,
coinciding on $M\setminus Q\simeq M'\setminus Q'$. The difference
$\operatorname{ind}D-\operatorname{ind}D'$ of their indices is
called the \textit{relative index} of~$D$ and~$D'$. A
\textit{relative index theorem} is a statement of the following
type: \textit{the relative index is independent of the structure
of~$M$ and~$M'$, as well as of~$D$ and~$D'$, on the set where they
coincide, i.e., on $M\setminus Q$}; in other words, to compute the
relative index, it suffices to know the structure of~$D$ and~$D'$
on~$Q$ and~$Q'$, respectively. Such theorems are trivial for smooth
closed manifolds (owing to the existence of a local index formula;
e.g., see~\cite{Gil1}), but they are informative in more general
cases. For example, a relative index theorem for Dirac operators on
complete noncompact Riemannian manifolds was proved in the famous
paper~\cite{GrLa1} by Gromov and Lawson. For further examples, we
refer the reader to the paper~\cite{R:NaSt7}, where the relative
index theorem was proved in a rather general abstract framework
that not only included many of the earlier known theorems as
special cases but also permitted one to obtain a number of index
formulas for elliptic differential operators and Fourier integral
operators on manifolds with singularities (see~\cite{NSScS99}).
Note, however, that index is not the only homotopy invariant of
elliptic operators, and hence it is of interest to obtain locality
theorems for broader sets of invariants. There are various
directions in which to generalize the relative index theorem. For
example, Bunke~\cite{Bun95} considered Dirac operators acting on
sections of projective bundles of Hilbert $B$\nobreakdash-modules
over a complete noncompact Riemannian manifold, where $B$~is a
$C^*$\nobreakdash-algebra, and obtained a relative index theorem
for such operators, the index being an element of the $K$-group
of~$B$. Here we solve a different problem. If the elliptic
operators in question are local with respect to some
$C^*$-algebra~$A$, then it is natural to ask how the corresponding
classes in the $K$-homology of~$A$ vary under a ``local'' variation
of the operator. Here the algebra~$A$ is not assumed to be
commutative, and accordingly, localization is based on ideals
in~$A$. It turns out---which is the main result of the paper---that
this variation obeys the same laws as the relative index in the
``classical'' theorems does. That is why we still refer to our
theorem as a ``relative index theorem,'' even though it deals with
$K$-homology classes rather than the index. All results are stated
in terms of Fredholm modules; for the standard construction that
assigns a Fredholm module to an elliptic operator, we refer the
reader to the literature (e.g., see~\cite{HiRo1}).

\section{$K$-homology}

Recall the definition of $K$-homology groups of a $C^*$-algebra~$A$
(see \cite[Chap.~8]{HiRo1}). A \textit{Fredholm module} over~$A$ is
a triple $x=(\rho,H,F)$, where $H$ is a Hilbert space, $\rho\colon
A\to\mathfrak{B}(H)$ is a representation of~$A$ on~$H$, and
$F\in\mathfrak{B}(H)$ is an operator such that
\begin{equation}\label{nazeq-1}
    [F,\rho(\varphi)]\sim 0\quad\forall\varphi\in A\quad
    \text{(\textit{locality})},\qquad  F\approx F^*,\qquad
    F^2\approx 1,
\end{equation}
where $\sim$ stands for equality modulo compact operators and
$\approx$ for equality modulo \textit{locally compact operators},
i.e., operators~$C$ such that the operators $\rho(\varphi)C$ and
$C\rho(\varphi)$ are compact for every $\varphi\in A$. Two Fredholm
modules $(\rho,H,F_0)$ and $(\rho,H,F_1)$ corresponding to one and
the same representation~$\rho$ are said to be \textit{homotopic} if
they can be embedded in a family $(\rho,H,F_t)$, $t\in[0,1]$, of
Fredholm modules such that the function $t\mapsto F_t$ is operator
norm continuous. A Fredholm module is said to be
\textit{degenerate} if all relations in~\eqref{nazeq-1} are
satisfied exactly rather than modulo (locally) compact operators.
We say that two Fredholm modules~$x$ and~$x'$ are
\textit{equivalent} if there exists a degenerate module~$x''$ such
that the modules $x\oplus x''$ and $x'\oplus x''$ are unitarily
equivalent to homotopic Fredholm modules. The set of equivalence
classes of Fredholm modules is denoted by $K^1(A)$; the direct sum
of modules induces a structure of an abelian group on $K^1(A)$,
which is called the (\textit{odd}) \textit{$K$-homology group
of}~$A$. The definition of the \textit{even $K$-homology
group}~$K^0(A)$ is completely similar; here one considers
\textit{graded} Fredholm modules, i.e., ones equipped with the
following additional structure: the space~$H$ is
$\mathbb{Z}_2$-graded, $H=H_+\oplus H_-$, the representation~$\rho$
is even (i.e., preserves the grading, $\rho(A)H_\pm\subset H_\pm$),
and the operator~$F$ is odd (i.e., $FH_+\subset H_-$ and
$FH_-\subset H_+$).

The results stated below hold for $K^0(A)$ as well as $K^1(A)$, and
it is tacitly assumed throughout that all Fredholm modules involved
are graded in the first case. For brevity, we often write $\varphi$
rather than $\rho(\varphi)$; which representation is meant is
always clear from the context.

\section{Fredholm modules agreeing on an ideal}

Let $x=(\rho,H,F)$ and $\widetilde x=(\widetilde\rho,\widetilde
H,\widetilde F)$ be Fredholm modules over $A$, and let $J\subset A$
be an ideal. The orthogonal projections\footnote{The subspace~$JH$,
as well as the subspaces~$J_1H$ and~$J_2H$ considered below, is
closed. This is a special case of the general assertion that the
subspace $BH$ of a Hilbert space~$H$ equipped with a representation
of a $C^*$\nobreakdash-algebra~$B$ is closed (see~\cite[pp.~25--26,
Sec.~1.9.17]{HiRo1}).} $P\colon H\to H_0$, where $H_0=JH\subset H$,
and $\widetilde P\colon \widetilde H\to \widetilde H_0$, where
$\widetilde H_0=J\widetilde H$, commute with the action of~$A$.
\begin{definition}
Given an operator $T\colon H_0\to\widetilde H_0$ intertwining the
representations~$\rho$ and~$\widetilde\rho$, preserving the grading
in the graded case, and satisfying $TPFPT^*\approx\widetilde
P\widetilde F\widetilde P$, we say that $x$~and~$\widetilde x$
\textit{agree on the ideal}~$J$.
\end{definition}

\section{Cutting and pasting}\label{3}

Let  $J_1,J_2\subset A$ be ideals such that $J_1+J_2=A$. Let
$x$~and~$\widetilde x$ be Fredholm modules over~$A$ agreeing on the
ideal $J=J_1\cap J_2$, and assume that the representations~$\rho$
and~$\widetilde\rho$ are nondegenerate (i.e., $AH=H$ and
$A\widetilde H=\widetilde H$).  Then one can define a Fredholm
module $x\diamond\widetilde x$ obtained, informally speaking, by
``pasting together over~$J$ the part of~$x$ corresponding to~$J_1$
with the part of~$\widetilde x$ corresponding to~$J_2$.'' To this
end, we represent~$F$ (and, in a similar way, $\widetilde F$) by
a~$3\times3$~matrix associated with the decomposition of~$H$ into
the direct orthogonal sum of the $A$-invariant subspaces $H_0=JH$,
$H_1=J_1H\ominus H_0$ (the orthogonal complement), and
$H_2=J_2H\ominus H_0$, $H=H_1\oplus H_0\oplus H_2$ (in this
particular order!).\footnote{In specific examples, some of the
subspaces~$H_0$, $H_1$, and~$H_2$ may prove to be trivial (zero).
Our argument remains valid in this case, but the result is not of
much interest.} We denote the orthogonal projection onto~$H_j$
by~$P_j$, $j=0,1,2$. Note that\footnote{From now on, for an
arbitrary $\varphi\in A$, by $\varphi_1\in J_1$ and $\varphi_2\in
J_2$ we denote arbitrary elements such that
$\varphi=\varphi_1+\varphi_2$.} $\varphi
P_1FP_2=\varphi_1P_1FP_2=P_1\varphi_1FP_2\sim P_1F\varphi_1P_2=0$
for any $\varphi\in A$; i.e., $P_1FP_2\approx 0$, and likewise
$P_2FP_1\approx0$, so that the desired representation can be
written out as
\begin{equation}\label{nazeq-2}
    F\approx
\begin{pmatrix}
      a & b & 0 \\
      b^* & c & d \\
      0 & d^* & e \\
    \end{pmatrix},\qquad \widetilde F\approx
\begin{pmatrix}
      \widetilde a & \widetilde b & 0 \\
      \widetilde b^* & \widetilde c & \widetilde d \\
      0 & \widetilde d^* & \widetilde e \\
    \end{pmatrix},\qquad
    \begin{matrix}
      a=a^*, & c=c^*, & e=e^*, \\
      \widetilde a=\widetilde a^*, & \widetilde c=\widetilde c^*, &
      \widetilde e=\widetilde e^*, \\
    \end{matrix}
\end{equation}
where all entries are local. The condition that~$x$ and~$\widetilde
x$ agree on~$J$ acquires the form $TcT^*\approx\widetilde c$. To
simplify the notation, we identify~$H_0$ with~$\widetilde H_0$
via~$T$; then we no longer write out~$T$ explicitly, and the
agreement condition on~$J$ becomes $c\approx\widetilde c$. Set
\begin{equation}\label{nazeq-3}
\begin{gathered}
    H\diamond\widetilde H=H_1\oplus H_0\oplus\widetilde H_2,\;\;
    \rho\diamond\widetilde\rho=\rho|_{H_1\oplus
    H_0}\oplus\widetilde\rho|_{\widetilde H_2},\;\;
    F\diamond\widetilde F= \begin{pmatrix}
      a & b & 0 \\
      b^* & c &\widetilde d \\
      0 & \widetilde d^* &\widetilde e \\
    \end{pmatrix}.
\end{gathered}
\end{equation}

\begin{proposition}
The Fredholm module $x\diamond\widetilde
x=(\rho\diamond\widetilde\rho,H\diamond\widetilde
H,F\diamond\widetilde F)$ over~$A$ is well defined by formulas
\eqref{nazeq-3}.
\end{proposition}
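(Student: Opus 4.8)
The plan is to verify, clause by clause, that the triple $x\diamond\widetilde x$ defined by~\eqref{nazeq-3} is a Fredholm module over~$A$; the only genuinely non-routine point is the relation $(F\diamond\widetilde F)^2\approx1$.

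First I would record the structural facts behind the decomposition $H=H_1\oplus H_0\oplus H_2$. Since $J_1,J_2$ are ideals, $J_1J_2\subset J$, and a standard approximate-unit argument then gives $\rho(J_1)\overline{J_2H}\subset H_0$, $\rho(J_2)\overline{J_1H}\subset H_0$, and $\overline{J_1H}\cap\overline{J_2H}=H_0$ (and likewise for $\widetilde\rho$). Consequently $H_0,H_1,H_2$ are $A$\nobreakdash-invariant; $H_1\perp H_2$, so $H=H_1\oplus H_0\oplus H_2$ is a genuine orthogonal decomposition that, because $J_1+J_2=A$ and $\rho$ is nondegenerate, exhausts~$H$; and, crucially, $\rho(\varphi_2)$ annihilates~$H_1$ while $\rho(\varphi_1)$ annihilates~$H_2$, so $\rho(\varphi)|_{H_1}=\rho(\varphi_1)|_{H_1}$ and $\rho(\varphi)|_{H_2}=\rho(\varphi_2)|_{H_2}$, and similarly for~$\widetilde\rho$. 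Since $T$ intertwines $\rho$ and $\widetilde\rho$, after the identification $H_0=\widetilde H_0$ the operators $\rho(\varphi)|_{H_0}$ and $\widetilde\rho(\varphi)|_{\widetilde H_0}$ coincide, so $\rho\diamond\widetilde\rho=\rho|_{H_1\oplus H_0}\oplus\widetilde\rho|_{\widetilde H_2}$ is a well-defined (nondegenerate, and even in the graded case, since $T$ preserves the grading) representation of~$A$ on $H\diamond\widetilde H$. Boundedness of $F\diamond\widetilde F$ is clear; $F\diamond\widetilde F\approx(F\diamond\widetilde F)^*$ follows from $a=a^*$, $\widetilde e=\widetilde e^*$, $c\approx c^*$ and the symmetric placement of the off-diagonal blocks; in the graded case $F\diamond\widetilde F$ is odd because each of its blocks, being a compression of the odd operator $F$ or $\widetilde F$, is odd. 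Locality $[F\diamond\widetilde F,\rho\diamond\widetilde\rho(\varphi)]\sim0$ is routine: every matrix entry of $F\diamond\widetilde F$ is a compression of $F$ or~$\widetilde F$ flanked by two of the projections $P_j$ (which lie in $\rho(A)'$, resp.\ $\widetilde\rho(A)'$), so the commutator of that entry with the appropriate block of $\rho\diamond\widetilde\rho(\varphi)$ equals a compression of $[F,\rho(\varphi)]\sim0$ or $[\widetilde F,\widetilde\rho(\varphi)]\sim0$.

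The substantive point is $(F\diamond\widetilde F)^2\approx1$, which I would prove by squaring the $3\times3$ matrix and examining each block. Compressing $F^2\approx1$ to the nine blocks of $H=H_1\oplus H_0\oplus H_2$, and using that the $(1,3)$ and $(3,1)$ entries of~$F$ are locally compact, yields $a^2+bb^*\approx P_1$, $ab+bc\approx0$, $b^*b+c^2+dd^*\approx P_0$ (where $d=P_0FP_2$), $cd+de\approx0$, $d^*d+e^2\approx P_2$; doing the same for $\widetilde F^2\approx1$ gives the mirror relations; and the agreement hypothesis gives $c\approx\widetilde c$. Comparing, the $(1,1),(1,2),(2,1)$ blocks of $(F\diamond\widetilde F)^2$ reduce at once to the first relations and the $(2,3),(3,2),(3,3)$ blocks to the mirror ones (invoking $c\approx\widetilde c$ where the middle space intervenes); what is left are the two corner blocks and the central block, and here the locality facts of the first step do the work. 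Because $\widetilde\rho(\varphi_1)$ annihilates $\widetilde H_2$, passing the commutator with~$\widetilde F$ across shows $\rho(\varphi_1)\widetilde d\sim0$; hence, for every $\varphi$, the $(1,3)$ block satisfies $\rho\diamond\widetilde\rho(\varphi)\,b\widetilde d=\rho(\varphi_1)\,b\widetilde d=P_1F\,\rho(\varphi_1)\widetilde d+(\text{compact})\sim0$, and the symmetric computation together with taking adjoints handles $\widetilde d^*b^*$, so both corner blocks are locally compact. For the central block $b^*b+c^2+\widetilde d\widetilde d^*-P_0$ I would split $\varphi=\varphi_1+\varphi_2$: on the one hand $\rho(\varphi_1)\widetilde d\widetilde d^*\sim0$, and since $\rho(\varphi_1)$ annihilates~$H_2$ one has $\rho(\varphi_1)d\sim0$, so by $b^*b+c^2+dd^*\approx P_0$ also $\rho(\varphi_1)(b^*b+c^2-P_0)=-\rho(\varphi_1)dd^*+(\text{compact})\sim0$; on the other hand $c\approx\widetilde c$ and the mirror relation $\widetilde b^*\widetilde b+\widetilde c^2+\widetilde d\widetilde d^*\approx P_0$ give $b^*b+c^2+\widetilde d\widetilde d^*-P_0\approx b^*b-\widetilde b^*\widetilde b$ modulo a locally compact operator, while $\rho(\varphi_2)$ annihilates $H_1$ and $\widetilde\rho(\varphi_2)$ annihilates $\widetilde H_1$, so $\rho(\varphi_2)b^*\sim0$, $\widetilde\rho(\varphi_2)\widetilde b^*\sim0$, and hence $\rho(\varphi_2)(b^*b-\widetilde b^*\widetilde b)\sim0$ (recall $\rho(\varphi_2)=\widetilde\rho(\varphi_2)$ on $H_0=\widetilde H_0$). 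Since the central block is self-adjoint modulo locally compact operators, checking multiplication on one side suffices, and $(F\diamond\widetilde F)^2\approx1$ follows.

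The main obstacle is precisely this central block: it is the only place where all three hypotheses ($F^2\approx1$, $\widetilde F^2\approx1$, and the agreement on~$J$) are needed together, and it requires a careful accounting of how the $J_1$-side relations, the $J_2$-side relations, and the matching on~$H_0$ interlock. To conclude, I would remark that the matrix entries $a,b,\dots$ and the intertwiner $T$ are pinned down only up to a locally compact perturbation and a unitary equivalence, and that a locally compact perturbation of $F\diamond\widetilde F$ alters neither the Fredholm-module conditions nor the homotopy class; hence $x\diamond\widetilde x$ is well defined by the formulas~\eqref{nazeq-3}.
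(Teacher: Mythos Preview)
Your proof is correct and follows essentially the same route as the paper's. The only cosmetic difference is that the paper first records the relations $\varphi\, b^*b\sim\varphi_1(1-c^2)$ and $\varphi\, dd^*\sim\varphi_2(1-c^2)$ as standalone consequences of $b^*b+c^2+dd^*\approx1$ and then uses them to dispatch the central $(2,2)$ block in one line, whereas you derive the same identities in the course of your $\varphi=\varphi_1+\varphi_2$ split.
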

\begin{proof}
In terms of the matrix in~\eqref{nazeq-2}, the condition
$F^2\approx 1$ becomes\footnote{Here $1$ stands for the identity
operators on relevant subspaces.}
\begin{gather}\label{e1}
    a^2+bb^*\approx 1,\quad ab+bc\approx 0,\quad cd+de\approx 0,\quad
    d^*d+e^2\approx 1,\qquad bd\approx0,
    \\\label{e3}
    \varphi b^*b\sim\varphi_1(1-c^2),\quad \varphi dd^*\sim\varphi_2(1-c^2)
    \qquad\forall \varphi\in A,
\end{gather}
and the last condition in~\eqref{e1} is satisfied automatically
($\varphi bd=(\varphi_1b)d\sim (b\varphi_1)d=b(\varphi_1d)=0$),
while condition~\eqref{e3} follows from the fact that
$b^*b+dd^*+c^2\approx 1$. Similar relations hold for $\widetilde
F$. To prove the proposition, it suffices to verify that
$(F\diamond\widetilde F)^2\sim1$. (The other conditions in
\eqref{nazeq-1} obviously hold for $x\diamond\widetilde x$.) The
verification, after squaring the matrix, is reduced to routine
calculations using the relation $c\approx\widetilde c$ and also
relations \eqref{e1}--\eqref{e3} for~$F$ and~$\widetilde F$. For
example, for the entry in the second line and second row, we obtain
$\varphi ((F\diamond\widetilde F)^2)_{22}=
\varphi(b^*b+c^2+\widetilde d\widetilde
d^*)\sim\varphi_1(1-c^2)+\varphi c^2+\varphi_2(1-c^2)=\varphi 1$,
$\varphi\in A$.
\end{proof}

The Fredholm module $\widetilde x\diamond x$ is defined in a
similar way.

\section{Relative index theorem}

Now we are in a position to state our main result.
\begin{theorem}
Under the assumptions of Sec.~\textup{\ref{3}}, one has
\begin{equation}\label{e7}
  [x\diamond\widetilde x]-[x]=[\widetilde x]-[\widetilde x\diamond x],
\end{equation}
where $[y]\in K^*(A)$ is the element defined by a Fredholm
module~$y$.
\end{theorem}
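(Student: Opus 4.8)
Rewrite \eqref{e7} in the equivalent form $[x\diamond\widetilde x]+[\widetilde x\diamond x]=[x]+[\widetilde x]$; since direct sum induces addition in $K^*(A)$, it suffices to show that the Fredholm modules $(x\diamond\widetilde x)\oplus(\widetilde x\diamond x)$ and $x\oplus\widetilde x$ are equivalent. I would first observe that these two modules share the same underlying data up to a canonical unitary equivalence: after reordering the direct summands, both can be realized on $\mathcal H:=H_1\oplus H_0\oplus\widetilde H_2\oplus\widetilde H_1\oplus\widetilde H_0\oplus H_2$, carrying the representation $\rho$ on the $H_\bullet$ summands and $\widetilde\rho$ on the $\widetilde H_\bullet$ summands (recall $H_0\simeq\widetilde H_0$ via $T$). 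Expressing the operators as $6\times6$ matrices with respect to this decomposition by means of \eqref{nazeq-2}, one finds that $G_0:=(F\diamond\widetilde F)\oplus(\widetilde F\diamond F)$ and the operator $G_1$ obtained by listing the summands of $F\oplus\widetilde F$ in the above order have identical entries, except for the way the outer subspaces $\widetilde H_2,H_2$ are coupled to the two ``interface'' copies $H_0,\widetilde H_0$ of $JH$: in $G_0$ the space $\widetilde H_2$ is coupled to $H_0$ (through $\widetilde d$) and $H_2$ is coupled to $\widetilde H_0$ (through $d$), whereas in $G_1$ it is the reverse. In particular $G_0$ and $G_1$ are operators for one and the same representation on $\mathcal H$.

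Next I would connect $G_0$ to a unitary conjugate of $G_1$ by an explicit norm-continuous ``rotation'' path $\Gamma_t$, $t\in[0,1]$. Using $T$ to identify $H_0$ with $\widetilde H_0$, put $\theta=\pi t/2$ and let $\Gamma_t$ agree with $G_0$ in every entry, except that $\widetilde H_2$ is coupled through $\widetilde d$ to the subspace $\{(\cos\theta\,T^*\xi,\;\sin\theta\,\xi):\xi\in\widetilde H_0\}\subset H_0\oplus\widetilde H_0$ and $H_2$ is coupled through $d$ to the orthogonal subspace $\{(-\sin\theta\,T^*\eta,\;\cos\theta\,\eta):\eta\in\widetilde H_0\}$. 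Then $\Gamma_0=G_0$, while $\Gamma_1$ is obtained from $G_1$ by conjugating with the unitary equal to $-1$ on $H_2$ and to the identity elsewhere; since this unitary commutes with the representation (and is even in the graded case, as $H_2$ is grading-invariant), $\Gamma_1$ and $G_1$ are unitarily equivalent Fredholm modules.

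The heart of the proof is to verify that $\Gamma_t$ is a Fredholm module for every $t$, i.e.\ that $\Gamma_t^2\approx1$; the locality and self-adjointness conditions in \eqref{nazeq-1} are immediate, all entries being local and the matrix symmetric. This is an entrywise computation, parallel to the one in the Proposition but with extra mixing terms, and it uses: relations \eqref{e1}--\eqref{e3} for $F$ and their analogues for $\widetilde F$; the agreement condition $TcT^*\approx\widetilde c$ together with its consequences $Tdd^*T^*\approx\widetilde d\widetilde d^*$ and $Tb^*bT^*\approx\widetilde b^*\widetilde b$, derived as in the Proposition; the intertwining property of $T$; and the localization identities $\rho(\varphi_1)P_2=P_2\rho(\varphi_1)=0$ for $\varphi_1\in J_1$ and $\rho(\varphi_2)P_1=P_1\rho(\varphi_2)=0$ for $\varphi_2\in J_2$, which hold because $J_1H=H_1\oplus H_0$, $J_2H=H_0\oplus H_2$, and $J_1,J_2$ are self-adjoint ideals. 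I expect this verification---together with fixing the signs and checking that the two rotated directions are indeed orthogonal---to be the main obstacle. The point of choosing orthogonal directions is that otherwise the $\widetilde H_2$--$H_2$ block of $\Gamma_t^2$ picks up a term of the form $\widetilde d^*Td$, which need not be locally compact; with the orthogonal choice this term cancels identically, the remaining off-diagonal blocks of $\Gamma_t^2-1$ either vanish by the localization identities or are $\approx0$ by \eqref{e1}, and the diagonal blocks come out $\approx1$ exactly as in the Proposition. Granting this, $[x\diamond\widetilde x]+[\widetilde x\diamond x]=[G_0]=[\Gamma_1]=[G_1]=[x]+[\widetilde x]$, which is \eqref{e7}.
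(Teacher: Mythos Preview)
Your plan is correct and coincides with the paper's own argument: the paper also rewrites \eqref{e7} as $[x]+[\widetilde x]=[x\diamond\widetilde x]+[\widetilde x\diamond x]$ and exhibits exactly the rotation homotopy you describe, written as an explicit $6\times6$ matrix $\mathcal F_t$ on $H_1\oplus H_0\oplus H_2\oplus\widetilde H_1\oplus H_0\oplus\widetilde H_2$ that rotates the couplings of $H_2$ and $\widetilde H_2$ between the two copies of $H_0$, with the sign in one of the $\sin t$ entries ensuring the orthogonality you single out. The only cosmetic differences are your ordering of the six summands and the direction of the homotopy (the paper runs from $F\oplus\widetilde F$ to a unitary conjugate of $(F\diamond\widetilde F)\oplus(\widetilde F\diamond F)$, with the unitary swapping $H_2$ and $\widetilde H_2$ and inserting the sign, whereas you absorb the swap into the reordering and are left with $-1$ on $H_2$).
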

Identity~\eqref{e7} means that the difference of $K$-homology
classes resulting from the nonagreement of Fredholm modules over
the ideal~$J_2$ is independent of the structure of these modules
over the ideal~$J_1$ (where they agree).

\begin{remark}
As far as the author is aware, the result is new even for a
commutative algebra~$A$ in that relation~\eqref{e7} is established
for elements of the $K$-homology group rather than for the indices
of the operators in question. (Note, however, that this was
essentially done ``behind the scenes'' in~\cite{Bun95} for the case
in which $A$ is a function algebra on a complete noncompact
Riemannian manifold and the Fredholm modules correspond to some
Dirac type operators.) The classical relative index theorems can be
obtained from our result if one assumes that $A$~is a unital
function algebra: it suffices to use the homomorphism
$\operatorname{ind}\colon K^0(A)\longrightarrow
K^0(\mathbb{C})\simeq\mathbb{Z}$ corresponding to the natural
embedding of~$\mathbb{C}$ in~$A$. Thus, the theorem stated above
can be viewed as a natural generalization of relative index
theorems in the framework of noncommutative geometry.

Note also that a similar theorem holds in Kasparov's $KK$-theory.
It is considered in a separate paper~\cite{RJMP}. The reason for
separate analysis is that although these two theorems do overlap,
they are not corollaries of one another. Namely, both theorems deal
with elements of $KK(A,B)$, but
\begin{itemize}
    \item (This paper) $B=\mathbf{C}$, and $A$ is arbitrary.
    \item (\cite{RJMP}) $B$ is arbitrary, and $A$ is unital.
\end{itemize}
Moreover, even the construction of the module $F\diamond\widetilde
F$ in~\cite{RJMP} is different: in the present paper, we use
projections, but in~\cite{RJMP} we are forced to use a partition of
unity in~$A$, because appropriate projections do not necessarily
exist in Kasparov $(A,B)$-modules.

\end{remark}

\begin{proof}[Outline of the proof]
It suffices to deform the Fredholm module $x\oplus\widetilde x$ to
a module that is unitarily equivalent to the module
$(x\diamond\widetilde x)\oplus(\widetilde x\diamond x)$. The
homotopy is given by the family of Fredholm modules
$(\rho\oplus\widetilde\rho,H\oplus\widetilde H,\mathcal{F}_t)$,
$t\in[0,\pi/2]$, where the operator $\mathcal{F}_t$ is specified in
the direct sum decomposition $H\oplus\widetilde H=H_1\oplus
H_0\oplus H_2\oplus\widetilde H_1\oplus H_0\oplus\widetilde H_2$ by
the $6\times 6$ block matrix
\begin{equation}\label{e8}
    \mathcal{F}_t=\begin{pmatrix}
      a & b & 0 & 0 & 0 & 0 \\
      b^* & c & d\cos t & 0 & 0 & -\widetilde d\sin t \\
      0 & d^*\cos t & e & 0 & d^*\sin t & 0 \\
      0 & 0 & 0 & \widetilde a & \widetilde b & 0 \\
      0 & 0 & d\sin t & \widetilde b^* & c & \widetilde d\cos t \\
      0 & -\widetilde d^*\sin t & 0 & 0 & \widetilde d^*\cos t & \widetilde e \\
    \end{pmatrix}.
\end{equation}
The first and second conditions in \eqref{nazeq-1} are obvious for
$\mathcal{F}_t$, and the third condition ($\mathcal{F}_t^2=1$) can
be verified by routine computations. Next,
$\mathcal{F}_0=F\oplus\widetilde F$ and
\begin{equation}\label{e9}
    \mathcal{F}_{\pi/2}=\begin{pmatrix}
      a   & b       & 0 & 0       & 0     & 0     \\
      b^* & c       & 0 & 0       & 0     & -\widetilde d\\
      0   & 0       & e & 0       & d^*   & 0     \\
      0   & 0       & 0 & \widetilde a   & \widetilde b & 0     \\
      0   & 0       & d & \widetilde b^* & c     & 0     \\
      0   & -\widetilde d^*& 0 & 0 & 0& \widetilde e            \\
    \end{pmatrix}=U^*\begin{pmatrix}
      a   & b       & 0     & 0       & 0     & 0 \\
      b^* & c       &  \widetilde d& 0       & 0     & 0 \\
      0   &  \widetilde d^*& \widetilde e & 0       & 0     & 0 \\
      0   & 0       & 0     & \widetilde a   & \widetilde b & 0 \\
      0   & 0       & 0     & \widetilde b^* & c     & d \\
      0   & 0       & 0     & 0       & d^*   & e \\
    \end{pmatrix}U,
\end{equation}
where the unitary operator
\begin{multline*}
    U\colon H\oplus\widetilde H\equiv H_1\oplus
H_0\oplus H_2\oplus\widetilde H_1\oplus H_0\oplus\widetilde H_2\\
\longrightarrow H_1\oplus H_0\oplus\widetilde H_2\oplus\widetilde
H_1\oplus H_0\oplus H_2\equiv (H\diamond \widetilde H)\oplus
(\widetilde H\diamond H)
\end{multline*}
interchanges the third and sixth components and then multiplies the
third component by $-1$. Thus,
$\mathcal{F}_{\pi/2}=U^*((F\diamond\widetilde F)\oplus(\widetilde
F\diamond F))U$, as desired.
\end{proof}

\subsection*{Acknowledgements}

I am grateful to A.~Yu.~Savin and B.~Yu.~Sternin for useful
discussion.

\end{document}